\documentclass[11pt]{article}
\usepackage{amsthm, amsmath, amssymb, amsfonts, url, booktabs, tikz, setspace, fancyhdr, bm, mathrsfs}
\usepackage{hyperref}
\usepackage{geometry}
\usepackage{hyperref, enumerate}
\usepackage[shortlabels]{enumitem}
\usepackage[babel]{microtype}
\usepackage[english]{babel}
\usepackage[capitalise]{cleveref}
\usepackage{comment}
\usepackage{bbm}
\usepackage{csquotes}
\usepackage{mathabx}
\usepackage{tikz}
\usepackage{graphicx}
\usepackage{float}
\usepackage{amsmath}

\counterwithin{figure}{section}

\newtheorem{theorem}{Theorem}[section]

\newtheorem{lemma}[theorem]{Lemma}

\newtheorem{claim}[theorem]{Claim}

\theoremstyle{definition}

\newtheorem*{defn-non}{Definition}

\usepackage[linesnumbered, ruled]{algorithm2e}
\SetKwRepeat{Do}{do}{while}%

\newenvironment{poc}{\begin{proof}[Proof of the claim]}{\end{proof}}

\usepackage{todonotes}
\newcommand*{\abs}[1]{\lvert#1\rvert}

\newcommand{\cG}{\mathcal{G}}
\newcommand{\cH}{\mathcal{H}}

\newcommand{\VC}{\operatorname{VC}}

\title{Bounded VC-dimension implies the Erd\H{o}s--Rado sunflower conjecture}
\author{
Gennian Ge\thanks{School of Mathematical Sciences, Capital Normal University, Beijing, China. Email: gnge@zju.edu.cn. Gennian Ge is supported by the National Key Research and Development Program of China under Grant 2025YFC3409900, the National Natural Science Foundation of China under Grant 12231014, and Beijing Scholars Program.}
\and
Jian Wang\thanks{School of Mathematics, Sichuan University,
Chengdu, China. Email: wangjianmath01@scu.edu.cn. Jian Wang is supported by National Natural Science
Foundation of China Grant no. 12471316.}
\and
Zixiang Xu\thanks{School of Mathematical Sciences, Zhejiang University, Hangzhou, China. Email: zixiangxu@zju.edu.cn.}
\and
Xiaochen Zhao\thanks{School of Mathematical Sciences, Capital Normal University, Beijing, China. Email: 2250501013@cnu.edu.cn.}
}
\date{}

\begin{document}

\maketitle

\begin{abstract}
The Erd\H{o}s--Rado sunflower conjecture asserts that, for every fixed integer \(r\ge 3\), there is a constant \(K(r)\) such that every \(\ell\)-uniform family with more than \(K(r)^{\ell}\) members contains an \(r\)-sunflower. We prove this conjecture for families of bounded VC-dimension.
\end{abstract}

\section{Introduction}

A collection of distinct sets \(H_{1},\ldots,H_{r}\) is an \(r\)-\emph{sunflower}, or a \(\Delta\)-system, if every two of its members have the same intersection. Thus there is a set \(C\), called the \emph{kernel}, such that \(H_{i}\cap H_{j}=C\) whenever \(i\ne j\), the sets \(H_{i}\setminus C\) are the petals and are pairwise disjoint. A family is \(\ell\)-\emph{bounded} if each of its members has size at most \(\ell\), and is \(\ell\)-uniform if each of its members has size exactly \(\ell\). This simple intersection pattern is one of the fundamental configurations in extremal set theory. For integers \(r\ge 3\) and \(\ell\ge 1\), let \(f_{r}(\ell)\) be the maximum size of an \(\ell\)-uniform family containing no \(r\)-sunflower. In 1960, Erd\H{o}s and Rado~\cite{ER60} proved the classical sunflower lemma \(f_{r}(\ell)\le (r-1)^{\ell}\ell!\), which have become standard reduction tools across combinatorics and theoretical computer science \cite{AB87,CKR22,DV25,FLSZ19,GMR13,LLZ18,LMMPZ22,NSR18,RA20,Rao26,R85}, and the remarkable persistence of the problem reflects the difficulty of forcing even this rigid local structure from a purely global hypothesis on the size of a family.

Erd\H{o}s and Rado~\cite{ER60} also conjectured that the factorial term can be removed: there should be a constant \(K(r)\), depending only on \(r\), for which
\[
f_{r}(\ell)\le K(r)^{\ell}.
\]
The conjecture has been one of the favorite open problems of Erd\H{o}s~\cite{E81}, which remains open in general and also stands as one of the most important problem in the field of extremal combinatorics~\cite{ErdosProblem}. In a major breakthrough, Alweiss, Lovett, Wu, and Zhang~\cite{ALWZ21} obtained a bound of the form \(\bigl(Cr^{3}\log \ell\log\log \ell\bigr)^{\ell}\), Bell, Chueluecha, and Warnke~\cite{BCW21} subsequently sharpened the best general estimate to \(\bigl(Cr\log \ell\bigr)^{\ell}\). These advances introduced and refined the spread viewpoint that now plays a central role in extremal set theory
\cite{BDGGL26,FHIKLMP25,FK25,INST25,KLS25,KN24,KZD24,WX26}.

For a set system \(\cH\), a set \(D\) is \emph{shattered} if \(\{H\cap D:H\in\cH\}=2^{D}\), and the VC-dimension \(\VC(\cH)\) is the largest size of such a set. This parameter measures the complexity of the traces of \(\cH\). Its basic counting consequence is the Sauer--Shelah lemma \cite{Sauer72,Shelah72,VC71}, which gives the largest size of a set systems with bounded VC-dimension.

Bounded VC-dimension has repeatedly made difficult extremal conjectures accessible. Fox, Pach, and Suk~\cite{FPS19} initiated the study of the Erd\H{o}s--Hajnal conjecture in this setting. Nguyen, Scott, and Seymour~\cite{NSS25} subsequently resolved the bounded-VC case completely: for every fixed \(d\), there is a constant \(c=c(d)>0\) such that every \(n\)-vertex graph with VC-dimension at most \(d\) contains a clique or an independent set of size at least \(n^{c}\), also see a recent improvement on the exponent~\cite{SWZ26}. Fox, Pach, and Suk~\cite{FPS21} also proved the Schur--Erd\H{o}s prediction \(r(k;m)=2^{\Theta_{k}(m)}\) for edge-colorings whose monochromatic neighborhood systems have bounded VC-dimension.

The sunflower problem under bounded VC-dimension was first studied by Fox, Pach, and Suk~\cite{FPS23}. They proved that an \(\ell\)-uniform family \(\cH\) with \(\VC(\cH)\le d\) contains an \(r\)-sunflower whenever \(\abs{\cH}\ge 2^{10\ell(dr)^{2\log^{*}\ell}}\). Balogh, Bernshteyn, Delcourt, Ferber, and Pham~\cite{BBDFP25} later obtained the substantially sharper sufficient condition \(\abs{\cH}>\bigl(Cr(\log d+\log^{*}\ell)\bigr)^{\ell}\) for an absolute constant \(C\), and also determined the sharp bound when \(d=1\). For general fixed \(d\), however, the exponential base still depends on \(\ell\). Our main result removes this final dependence.

\begin{theorem}\label{thm:main}
Let \(d\ge 1\), \(r\ge 2\), and \(\ell\ge 1\). If \(\cH\) is an \(\ell\)-bounded set system with \(\VC(\cH)\le d\) and no \(r\)-sunflower, then
\[
\abs{\cH}\le \sum_{k=0}^{\ell}(50dr)^{k}<2(50dr)^{\ell}.
\]
In particular, every \(\ell\)-uniform set system \(\cH\) with \(\VC(\cH)\le d\) and \(\abs{\cH}>(50dr)^{\ell}\) contains an \(r\)-sunflower.
\end{theorem}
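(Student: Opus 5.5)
The bound $\sum_{k=0}^{\ell}(50dr)^k$ satisfies $S_\ell = 1 + (50dr)S_{\ell-1}$, so I would prove the theorem by induction on $\ell$ through a one-step recursion. Write $c=50dr$. The base case $\ell=0$ is trivial, since then $\cH\subseteq\{\emptyset\}$. For the induction step, discard $\emptyset$ (this accounts for the summand $1$) and aim for the following.

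\textbf{Key Lemma.} If $\cH$ is a family of nonempty sets with $\VC(\cH)\le d$ and no $r$ pairwise disjoint members, then some element $x$ lies in at least $\abs{\cH}/c$ members.

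Granting the Key Lemma, the induction runs as follows. If $\cH\setminus\{\emptyset\}$ contains $r$ pairwise disjoint sets, they already form an $r$-sunflower with empty kernel. Otherwise take $x$ as in the lemma and form the link $\cH(x)=\{H\setminus\{x\}: x\in H\in\cH\}$. This link is $(\ell-1)$-bounded and has the same size as the star of $x$. Its VC-dimension is at most $d$, because a set shattered by $\cH(x)$ avoids $x$ and is also shattered by the star, hence by $\cH$. Finally, an $r$-sunflower in $\cH(x)$ lifts, after adding $x$ back, to an $r$-sunflower in $\cH$. The induction hypothesis then gives
\[
\abs{\cH}\le 1+c\,\abs{\cH(x)}\le 1+c\,S_{\ell-1}=S_\ell .
\]
The final inequality $S_\ell<2c^\ell$ is a geometric sum with $c\ge 2$.

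\textbf{Proving the Key Lemma (main obstacle).} The naive argument is too weak: take a maximal pairwise disjoint subfamily, which has at most $r-1$ members. Its union has size at most $(r-1)\ell$ and meets every set, so some element has degree at least $\abs{\cH}/((r-1)\ell)$. This loses a factor of $\ell$, and the whole task is to replace $\ell$ by $O(d)$ using the VC hypothesis. Note that the lemma must hold for sets of arbitrary size, with no dependence on $\ell$.

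I would argue by contradiction. Suppose every element has degree less than $\varepsilon\abs{\cH}$ with $\varepsilon=1/c$, so under the uniform measure $\mu$ on $\cH$ every point has measure less than $\varepsilon$. I would then build $r$ pairwise disjoint members greedily. Pick $H_1$, and note that the family of members meeting $H_1$ is $\bigcup_{x\in H_1}\mathrm{st}(x)$, a union of stars each of measure less than $\varepsilon$. The point is to bound the measure of this union by $O(d\varepsilon)$ rather than $\abs{H_1}\varepsilon$. For this I would use the VC structure through a Haussler-packing or chaining argument. The stars $\mathrm{st}(x)$, viewed as subsets of $\cH$, form the dual system. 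Its traces on $\cH$ are controlled because $\cH$ has VC-dimension $d$, in the form of a packing bound with exponent $d$ in the symmetric-difference metric with respect to $\mu$.

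Concretely, I would choose $H_1$ to minimise $\mu(\{H: H\cap H_1\neq\emptyset\})$, or choose it at random from $\mu$. The aim is to show that for a typical $H_1$ this union has measure at most $C d\varepsilon$ with $C\approx 25$. I would try to establish this via an $\varepsilon$-approximation/sampling lemma: a random sample of $O(d/\varepsilon)$ members captures all intersection patterns up to error $\varepsilon$. Iterating $r-1$ times, each step removes measure at most $Cd\varepsilon$, which stays below $1$ as long as $rCd\varepsilon<1$. This yields $r$ pairwise disjoint sets, a contradiction, and the constant $50$ is what this bookkeeping should produce.

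If the direct union bound fails, I would fall back on weighted double counting over the intersection graph, which has density at least $1/(r-1)$ by Turán's theorem, combined with a VC-based bound on $\sum_x \deg(x)^2$ that avoids the trivial factor $\ell$.
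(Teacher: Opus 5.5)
\textbf{Verdict: genuine gap.} Your recursion $\abs{\cH}\le 1+c\,\abs{\cH(x)}$ is fine as bookkeeping. However, everything rests on the Key Lemma, and the Key Lemma is false already for $d=2$, no matter how large $c$ is.

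Let $\cH$ be the set of lines of a projective plane of order $n$. Every pair of points is shattered, but no triple is: a collinear triple $a,b,c$ misses the trace $\{a,b\}$, and a non-collinear triple misses the full trace. So $\VC(\cH)=2$. Any two lines meet, so $\cH$ has no $2$ pairwise disjoint members. Yet every point lies in only $n+1$ of the $n^2+n+1$ lines, a proportion tending to $0$. For general $r$, take $r-1$ copies on disjoint ground sets: the VC-dimension stays $2$, and there are still no $r$ pairwise disjoint members.

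The same example defeats each route you sketch. Every line meets $H_1$, so $\mu\bigl(\bigcup_{x\in H_1}\mathrm{st}(x)\bigr)=1$ rather than $O(d\varepsilon)$. Likewise $\sum_x\mu_x^2=(n+1)^2/(n^2+n+1)\approx 1$, again not $O(d\varepsilon)$. An upper bound on singleton degrees only controls the fractional matching number (here about $n$), and bounded VC-dimension does not turn that into an integral matching of size $2$. Inducting through a single high-degree element is the Erd\H{o}s--Rado recursion. It uses sunflower-freeness only through ``no $r$ disjoint members'' at each level, which is too weak.

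The missing idea is spreadness at every level, which the paper obtains without any induction on $\ell$. Take $\ell$-uniform $\cH$ with $\abs{\cH}>q^{-\ell}$, where $q=1/(50dr)$, and choose $C$ inclusion-maximal with $d_{\cH}(C)\ge\abs{\cH}q^{\abs{C}}$. Uniformity forces $\abs{C}<\ell$. Maximality makes the link $\cG$ satisfy $d_{\cG}(T)\le q^{\abs{T}}\abs{\cG}$ for every nonempty $T$. The projective plane is not spread in this sense: for $T$ a line, $d_{\cH}(T)=1>q^{n+1}\abs{\cH}$.

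This full spread condition is what lets the VC hypothesis enter:
\begin{itemize}
\item $\mathbb{P}(G\cap E=A)\le q^{\abs{A}}$ gives $H(G\cap E)\ge\log(1/q)\sum_{x\in E}\mu_x$. With only $\mu_x\le\varepsilon$ you would bound merely $\mathbb{P}(G\cap E\neq\varnothing)$.
\item Sauer--Shelah gives $H(G\cap E)\le d\log(em/d)$.
\item A dyadic decomposition of the $\mu_x$ then yields $\sum_x\mu_x^2<49dq$.
\item Caro--Wei on the intersection graph gives $\nu(\cG)>1/(49dq)>r$, hence an $r$-sunflower with kernel $C$.
\end{itemize}
Your fallback, a VC-based bound on $\sum_x\deg(x)^2$ plus counting in the intersection graph, is precisely this last step. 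But it is available only under the full $q^{\abs{T}}$ condition, which a single-element induction never produces. The $\ell$-bounded statement then follows by summing the uniform bound over the layers $\binom{X}{k}$, not by recursion on $\ell$.
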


\section{Proof of main result}

\subsection{Notation and Tools}
For a family \(\cG\) on a ground set \(X\) and \(T\subseteq X\), write
\[
d_{\cG}(T)=\abs{\{G\in\cG:T\subseteq G\}}.
\]
Thus \(d_{\cG}(T)\) is the number of members of \(\cG\) that contain \(T\).
For \(E\subseteq X\), the family \(\{G\cap E:G\in\cG\}\) is called the \emph{trace} of \(\cG\) on \(E\). We write \(\nu(\cG)\) for the matching number of \(\cG\), that is, the maximum number of pairwise disjoint members in \(\cG\).

Let \(0<q<1\). A nonempty family \(\cG\) is \emph{\(q\)-spread} if \(d_{\cG}(T)\le q^{\abs{T}}\abs{\cG}\) for every nonempty \(T\subseteq X\). In other words, the proportion of members containing any prescribed set \(T\) is at most \(q^{\abs{T}}\). If \(G\) is chosen uniformly from \(\cG\), meaning that each member is chosen with probability \(\frac{1}{\abs{\cG}}\), we write
\[
\mu_{x}=\mathbb{P}(x\in G)=\frac{d_{\cG}(\{x\})}{\abs{\cG}}.
\]
Thus \(\mu_{x}\) is simply the probability that a random member contains \(x\). For a \(q\)-spread family, the case \(T=\{x\}\) gives \(\mu_{x}\le q\).

We use the following standard form of the Sauer--Shelah lemma~\cite{Sauer72,Shelah72,VC71}.

\begin{lemma}\label{lem:sauer}
Let \(\cG\) be a set system on \(X\) with \(\VC(\cG)\le d\), and let \(E\subseteq X\) have size \(m\ge d\ge 1\). Then
\[
\abs{\{G\cap E:G\in\cG\}}\le \sum_{i=0}^{d}\binom{m}{i}\le \left(\frac{em}{d}\right)^{d}.
\]
\end{lemma}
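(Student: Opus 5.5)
The first inequality is the classical Sauer--Shelah bound, which I will derive by a shifting (down-compression) argument. The second is an elementary binomial estimate.

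Let \(\mathcal{T}=\{G\cap E:G\in\cG\}\), a family of subsets of \(E\). Every subset of \(E\) shattered by \(\mathcal{T}\) is also shattered by \(\cG\), since traces on subsets of \(E\) coincide. Hence \(\VC(\mathcal{T})\le d\), and the goal becomes \(\abs{\mathcal{T}}\le\sum_{i\le d}\binom{m}{i}\) for any family \(\mathcal{T}\subseteq 2^{E}\) with \(\abs{E}=m\) and \(\VC(\mathcal{T})\le d\).

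For \(x\in E\), define the down-shift \(S_x\). It replaces each \(T\in\mathcal{T}\) with \(x\in T\) by \(T\setminus\{x\}\), provided \(T\setminus\{x\}\notin\mathcal{T}\), and leaves all other members unchanged. This map is injective on \(\mathcal{T}\), so \(\abs{S_x(\mathcal{T})}=\abs{\mathcal{T}}\).

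The key step, and the only real obstacle, is showing that \(S_x\) creates no new shattered set. Let \(D\) be shattered by \(S_x(\mathcal{T})\).
\begin{itemize}
\item If \(x\notin D\), the traces on \(D\) are unchanged by the shift, because removing \(x\) does not affect intersections with \(D\). So \(D\) is shattered by \(\mathcal{T}\).
\item If \(x\in D\), take any \(B\subseteq D\) with \(x\in B\). Some \(T'\in S_x(\mathcal{T})\) has \(T'\cap D=B\). Since \(T'\) contains \(x\), \(T'\) was not shifted, so \(T'\in\mathcal{T}\). It was left in place exactly because \(T'\setminus\{x\}\in\mathcal{T}\).
\end{itemize}
So in the second case both \(B\) and \(B\setminus\{x\}\) are realized as traces of \(\mathcal{T}\) on \(D\). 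Since every \(B\subseteq D\) containing \(x\) arises this way, and each \(C\subseteq D\setminus\{x\}\) equals \(B\setminus\{x\}\) for \(B=C\cup\{x\}\), all subsets of \(D\) appear. Hence \(D\) is shattered by \(\mathcal{T}\).

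Next, iterate shifts over all \(x\in E\). The potential \(\sum_{T\in\mathcal{T}}\abs{T}\) strictly decreases with each nontrivial shift, so the process terminates in a family \(\mathcal{T}^*\) that is down-closed, with the same size and \(\VC(\mathcal{T}^*)\le d\). In a down-closed family every member is shattered, since all of its subsets are themselves members. Thus every member of \(\mathcal{T}^*\) has size at most \(d\), which gives \(\abs{\mathcal{T}}=\abs{\mathcal{T}^*}\le\sum_{i=0}^{d}\binom{m}{i}\).

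For the second inequality, use \(m\ge d\), so \(d/m\le 1\):
\[
\sum_{i=0}^{d}\binom{m}{i}\le\sum_{i=0}^{d}\binom{m}{i}\Bigl(\frac{m}{d}\Bigr)^{d-i}=\Bigl(\frac{m}{d}\Bigr)^{d}\sum_{i=0}^{d}\binom{m}{i}\Bigl(\frac{d}{m}\Bigr)^{i}\le\Bigl(\frac{m}{d}\Bigr)^{d}\Bigl(1+\frac{d}{m}\Bigr)^{m}\le\Bigl(\frac{em}{d}\Bigr)^{d}.
\]
The last step uses \((1+d/m)^{m}\le e^{d}\).
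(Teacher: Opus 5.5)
Your proof is correct. Note, though, that the paper does not prove this lemma at all. It quotes it as the standard Sauer--Shelah lemma, citing Sauer, Shelah, and Vapnik--Chervonenkis, and it takes the estimate $\sum_{i\le d}\binom{m}{i}\le (em/d)^{d}$ as known.

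You instead give a self-contained argument. The first inequality comes from the down-shifting proof. The second comes from the standard trick of inserting the factors $(m/d)^{d-i}\ge 1$ and comparing with the binomial expansion $(1+d/m)^{m}\le e^{d}$.

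Two points you assert without detail do hold:
\begin{itemize}
\item $S_x$ is injective, because the only possible collision is between a shifted $T$ and an unshifted member equal to $T\setminus\{x\}$, and the shifting rule excludes this.
\item ``Iterate shifts'' should be read as applying nontrivial shifts until every $S_x$ fixes the family. Your potential argument guarantees this, and it is exactly what makes the terminal family down-closed.
\end{itemize}

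Compared with the classical proof by induction on $m$, your route gives a little more for free. That proof splits off the subfamily of sets $T\subseteq E\setminus\{x\}$ with both $T$ and $T\cup\{x\}$ in $\mathcal{T}$, which has VC-dimension at most $d-1$. Your key step shows that every set shattered by $S_x(\mathcal{T})$ is already shattered by $\mathcal{T}$. Combined with the down-closed terminal family, this yields Pajor's strengthening: $\mathcal{T}$ shatters at least $\abs{\mathcal{T}}$ subsets of $E$, and the bound follows. For the paper's purposes, only the numerical bound is fed into Lemma~\ref{lem:local-entropy}, so the citation suffices there.
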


We shall use some facts about entropy. If a discrete random variable \(Y\) takes only finitely many values, its \emph{support} is the set of its possible values, namely
\[
\operatorname{supp}(Y)=\{y:\mathbb{P}(Y=y)>0\}.
\]
Writing \(p_{y}=\mathbb{P}(Y=y)\), the \emph{entropy} of \(Y\) is defined as
\[
H(Y)=\sum_{y\in\operatorname{supp}(Y)}p_{y}\log\frac{1}{p_{y}}.
\]
Thus entropy is the average of \(\log\frac{1}{p_{y}}\), where each possible value \(y\) is given its probability \(p_{y}\). Unless a base is displayed explicitly, all logarithms in the proof are natural. The entropy cannot exceed the logarithm of the number of values that the random variable can take. Indeed, by the concavity of the logarithm,
\begin{equation}\label{eq:concavity}
H(Y)\le \log\left(\sum_{y\in\operatorname{supp}(Y)}p_{y}\frac{1}{p_{y}}\right)
=\log\abs{\operatorname{supp}(Y)}.
\end{equation}

We also use the following standard Caro--Wei bound for the independence number of a graph, see~\cite{AS16}. Here \(\alpha(J)\) denotes the largest size of an independent set in \(J\), and \(\deg_{J}(v)\) denotes the degree of a vertex \(v\).

\begin{lemma}\label{lem:independence}
If \(J\) is a graph with \(N\) vertices and \(e(J)\) edges, then
\[
\alpha(J)\ge \sum_{v\in V(J)}\frac{1}{\deg_{J}(v)+1}\ge \frac{N^{2}}{N+2e(J)}.
\]
\end{lemma}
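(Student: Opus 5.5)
We prove the two inequalities in \cref{lem:independence} separately: the first by a random-ordering argument, the second by convexity.

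For the first inequality, take a uniformly random linear ordering \(\pi\) of \(V(J)\). Let \(I_\pi\) be the set of vertices \(v\) that come before all of their neighbours in \(\pi\).

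\(I_\pi\) is independent. Suppose \(u,v\in I_\pi\) were adjacent. Then \(u\) precedes \(v\) and \(v\) precedes \(u\), which is impossible. Hence \(\alpha(J)\ge \abs{I_\pi}\) for every ordering \(\pi\), and in particular \(\alpha(J)\ge \mathbb{E}\abs{I_\pi}\).

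Now compute the expectation. Fix a vertex \(v\) and consider the \(\deg_J(v)+1\) vertices \(\{v\}\cup N(v)\). Under a uniform ordering, each of them is equally likely to come first among this set. So \(\mathbb{P}(v\in I_\pi)=1/(\deg_J(v)+1)\). Linearity of expectation gives
\[
\alpha(J)\ge \mathbb{E}\abs{I_\pi}=\sum_{v\in V(J)}\frac{1}{\deg_J(v)+1}.
\]

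For the second inequality, apply Cauchy--Schwarz, or equivalently convexity of \(x\mapsto 1/x\) on \((0,\infty)\) via Jensen, to the \(N\) positive numbers \(a_v=\deg_J(v)+1\). This gives
\[
\sum_{v}\frac{1}{a_v}\cdot\sum_{v}a_v\ge N^{2}.
\]
By the handshake lemma,
\[
\sum_v a_v=N+\sum_v\deg_J(v)=N+2e(J).
\]
Combining these yields \(\sum_v 1/(\deg_J(v)+1)\ge N^2/(N+2e(J))\).

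Neither step is a real obstacle. The one point needing care is checking that \(v\) is equally likely to be first among \(\{v\}\cup N(v)\), which holds because the restriction of a uniform ordering to any subset is itself uniform.
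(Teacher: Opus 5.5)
Your proof is correct: the random-ordering argument gives the first inequality, and Cauchy--Schwarz applied to \(a_v=\deg_J(v)+1\) together with the handshake lemma gives the second. The paper does not prove this lemma; it cites it as the standard Caro--Wei bound from Alon--Spencer, and the standard proof of that bound is the argument you give.
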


\subsection{Proof of Theorem~\ref{thm:main}}

We first bound the sum of the probabilities with which the elements of a fixed set \(E\) occur in a random member of \(\cG\). It perfectly combines the property of \(q\)-spread family and Sauer--Shelah lemma, which is the key ingredient of the proof.

\begin{lemma}\label{lem:local-entropy}
Let \(\cG\) be a \(q\)-spread family with \(\VC(\cG)\le d\), where \(0<q<1\) and \(d\ge 1\). Let \(G\) be chosen uniformly at random from \(\cG\), and put \(\mu_{x}=\mathbb{P}(x\in G)\). If \(E\subseteq X\) has size \(m\ge d\), then
\[
\sum_{x\in E}\mu_{x}\le \frac{d\log\left(\frac{em}{d}\right)}{\log\left(\frac{1}{q}\right)}.
\]
\end{lemma}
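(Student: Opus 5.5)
Let \(Y=G\cap E\) be the trace of the uniformly random member \(G\) on \(E\). We bound \(H(Y)\) from above and below. For the upper bound, \(\operatorname{supp}(Y)\subseteq\{G'\cap E:G'\in\cG\}\). By Lemma~\ref{lem:sauer} (here we use \(m\ge d\ge1\)) this has size at most \((em/d)^{d}\). Then \eqref{eq:concavity} gives
\[
H(Y)\le d\log\left(\frac{em}{d}\right).
\]

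For the lower bound we use spreadness. For each value \(T\) in the support of \(Y\), the event \(\{Y=T\}\) implies \(T\subseteq G\). Hence \(\mathbb{P}(Y=T)\le d_{\cG}(T)/\abs{\cG}\le q^{\abs{T}}\). When \(T=\emptyset\) this reads \(\mathbb{P}(Y=\emptyset)\le 1=q^{0}\), which holds trivially, so the bound is valid for every \(T\). Consequently \(\log\frac{1}{\mathbb{P}(Y=T)}\ge \abs{T}\log\frac1q\). Averaging over the distribution of \(Y\) gives
\[
H(Y)=\mathbb{E}\left[\log\frac{1}{\mathbb{P}(Y=y)}\Big|_{y=Y}\right]\ge \log\left(\frac1q\right)\mathbb{E}\abs{Y}.
\]
By linearity of expectation, \(\mathbb{E}\abs{Y}=\sum_{x\in E}\mathbb{P}(x\in G)=\sum_{x\in E}\mu_x\).

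Combining the two bounds yields \(\log(1/q)\sum_{x\in E}\mu_x\le d\log(em/d)\). Since \(0<q<1\), we have \(\log(1/q)>0\), so we may divide by it, which gives the claim.

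There is no real obstacle; the only point needing care is the pointwise bound \(\mathbb{P}(Y=T)\le q^{\abs{T}}\). It holds because the event \(\{G\cap E=T\}\) is contained in \(\{T\subseteq G\}\), and the \(q\)-spread condition is stated for all nonempty \(T\), with the empty case trivial.
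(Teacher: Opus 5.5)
Your proposal is correct and follows the paper's own proof essentially step for step. You bound the entropy of the trace \(G\cap E\) from below by \(\log(1/q)\sum_{x\in E}\mu_x\), using the pointwise spread bound \(\mathbb{P}(G\cap E=A)\le q^{\abs{A}}\) with the empty set handled trivially, and from above by \(d\log(em/d)\) via Lemma~\ref{lem:sauer} and \eqref{eq:concavity}.
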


\begin{proof}[Proof of Lemma~\ref{lem:local-entropy}]
Set \(R=G\cap E\). Thus \(R\) records exactly which elements of \(E\) occur in the random set \(G\). If \(A\subseteq E\) is a possible value of \(R\), then the event \(R=A\) implies \(A\subseteq G\), and therefore
\[
\mathbb{P}(R=A)\le \mathbb{P}(A\subseteq G)\le q^{\abs{A}}.
\]
For \(A\ne\varnothing\), the second inequality is exactly the spread condition. For \(A=\varnothing\), it is the trivial inequality \(\mathbb{P}(R=\varnothing)\le 1=q^{0}\). Taking logarithms gives
\[
\log\frac{1}{\mathbb{P}(R=A)}\ge \abs{A}\log\frac{1}{q}.
\]
We multiply this inequality by \(\mathbb{P}(R=A)\) and sum over all possible values \(A\) of \(R\). By the definition of entropy,
\[
H(R)\ge \log\left(\frac{1}{q}\right)\sum_{A\in\operatorname{supp}(R)}\mathbb{P}(R=A)\abs{A}
=\log\left(\frac{1}{q}\right)\sum_{x\in E}\mu_{x}.
\]
To see the last equality directly, the middle sum is the average size of \(R\). An element \(x\in E\) contributes one to this size precisely when \(x\in G\), which occurs with probability \(\mu_{x}\).

It remains to bound the same entropy from above. Since every member of \(\cG\) is chosen with positive probability, the possible values of \(R\) are exactly the sets \(G\cap E\) with \(G\in\cG\). Hence Lemma~\ref{lem:sauer} and the general entropy bound~\eqref{eq:concavity} give
\[
H(R)\le\log\abs{\operatorname{supp}(R)}
=\log\abs{\{G\cap E:G\in\cG\}}
\le d\log\left(\frac{em}{d}\right).
\]
Comparing the lower and upper bounds for \(H(R)\) proves the lemma.
\end{proof}

We next bound the sum of the squared inclusion probabilities based on lemma~\ref{lem:local-entropy}.

\begin{lemma}\label{lem:square-sum}
Let \(\cG\) be \(q\)-spread with \(\VC(\cG)\le d\), where \(d\ge 1\) and \(0<q\le\frac{1}{2}\). If \(G\) is chosen uniformly at random from \(\cG\) and \(\mu_{x}=\mathbb{P}(x\in G)\), then
\[
\sum_{x\in X}\mu_{x}^{2}<49dq.
\]
\end{lemma}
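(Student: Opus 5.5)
The plan is to sort the inclusion probabilities and combine two upper bounds on \(\mu_x\): the trivial bound \(\mu_x\le q\), which comes from the spread condition with \(T=\{x\}\), and an averaged bound from Lemma~\ref{lem:local-entropy}. Only the \(x\) with \(\mu_x>0\) matter; if there are finitely many, pad with zeros. List these values as \(\mu_{(1)}\ge \mu_{(2)}\ge\cdots\). Write \(L=\log(1/q)\ge \log 2\), and let \(S_m=\sum_{i\le m}\mu_{(i)}\). For \(m\ge d\), apply Lemma~\ref{lem:local-entropy} to the set \(E\) of the \(m\) elements with the largest \(\mu\)-values. This gives \(S_m\le d\log(em/d)/L\). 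Since the sequence is nonincreasing, \(m\mu_{(m)}\le S_m\), so
\[
\mu_{(m)}\le \min\Bigl\{q,\ \frac{d\log(em/d)}{mL}\Bigr\}\qquad (m\ge d).
\]

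Choose the threshold \(M=\lceil d/q\rceil\). Note that \(M\ge 2d\ge d\) because \(q\le \frac12\). Then split the sum into a head and a tail:
\[
\sum_{x}\mu_x^2=\sum_{m\le M}\mu_{(m)}^2+\sum_{m>M}\mu_{(m)}^2 .
\]
For the head, use \(\mu_{(m)}\le q\), so it is at most \(Mq^2\le (d/q+1)q^2\le dq+q^2\le \tfrac32 dq\). For the tail, use the second bound. This gives
\[
\sum_{m>M}\frac{d^2\log^2(em/d)}{m^2L^2}\le \frac{d^2}{L^2}\int_{M}^{\infty}\frac{\log^2(et/d)}{t^2}\,dt .
\]
The integrand is decreasing for \(t\ge M\ge d\), which justifies comparing the sum with the integral. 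The integral can be computed explicitly: substituting \(u=\log(et/d)\), it equals
\[
\frac{1}{M}\Bigl(\log^2(eM/d)+2\log(eM/d)+2\Bigr).
\]

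Finally insert \(M\ge d/q\) in the prefactor \(1/M\). For the logarithm, use \(\log(eM/d)\le \log(e(1/q+1/d))\le 1+L+\log\tfrac32\), which holds because \(1/d\le 1\le \tfrac12\cdot\frac1q\). The tail is then at most
\[
dq\cdot\frac{(L+c)^2+2(L+c)+2}{L^2},\qquad c=1+\log\tfrac32 .
\]
This ratio is decreasing in \(L\), so its worst case is \(L=\log 2\). There it evaluates to a constant of order \(30\)–\(40\). Adding the head contribution \(\tfrac32 dq\) should leave a total below \(49dq\).

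The main obstacle is this final constant bookkeeping. All the estimates must be kept tight enough to land under \(49\): the ceiling in \(M\), the sum-to-integral comparison, and the bound on \(\log(eM/d)\). If the crude version overshoots, I would first tune the threshold to \(M=\lceil \beta d/q\rceil\) and optimise over \(\beta\). Failing that, I would replace the integral comparison with a dyadic grouping \(m\in(2^kM,2^{k+1}M]\), where \(\mu_{(m)}\le S_{2^{k}M}/(2^kM)\).
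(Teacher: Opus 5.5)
Your proof is correct, and it follows a genuinely different decomposition from the paper's. Both arguments use Lemma~\ref{lem:local-entropy} as the only real input, but they are dual to each other.

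The paper groups elements by the \emph{value} of $\mu_x$ into dyadic level sets $E_j=\{x:2^{-j-1}q<\mu_x\le 2^{-j}q\}$ and applies the lemma to each level set. It then has to invert the implicit inequality $u_jL<L+(j+1)\log 2+1+\log u_j$ (Claim~\ref{claim:uj}) to obtain $m_j\le 8d(j+2)2^{j}/q$. Groups with $m_j<d$ are handled separately, and summing $8dq(j+2)2^{-j}$ gives the constant $48+\frac23$.

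You instead order by \emph{rank} and apply the lemma to prefix sets. This gives the explicit decay $\mu_{(m)}\le d\log(em/d)/(mL)$ directly, with no inversion step and no small-group case. The price is a sum-to-integral comparison and a closed-form integral. Both are right: the integrand $\log^2(et/d)/t^2$ has derivative $2s(1-s)/t^3$ with $s=\log(et/d)\ge 1$ for $t\ge d$, so it is decreasing on $[M,\infty)$. Your evaluation $\frac1M\bigl(u_0^2+2u_0+2\bigr)$ with $u_0=\log(eM/d)$ is also correct. The paper's route stays discrete and calculus-free; yours is shorter and more transparent about where the $\log(1/q)$ in the denominator is used.

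Your final bookkeeping is tighter than you feared. At $L=\log 2$ you have $L+c=1+\log 3$, so the ratio is
$\bigl((1+\log 3)^2+2(1+\log 3)+2\bigr)/\log^2 2\approx 22.07$, not $30$--$40$. Adding the head contribution $\frac32 dq$ gives a total below $23.6\,dq$. So no tuning of $M$ and no dyadic fallback is needed. In fact your route would let one replace $50dr$ by $24dr$ in Theorem~\ref{thm:main}.
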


\begin{proof}[Proof of Lemma~\ref{lem:square-sum}]
For every \(x\in X\), the spread condition applied to \(\{x\}\) gives \(\mu_{x}\le q\). We separate the elements according to the size of this probability. For each integer \(j\ge 0\), let
\[
E_{j}=\{x\in X:2^{-j-1}q<\mu_{x}\le 2^{-j}q\}
\]
and set \(m_{j}=\abs{E_{j}}\). The sets \(E_{j}\) partition the elements for which \(\mu_{x}>0\), elements with \(\mu_{x}=0\) do not affect the desired sum. Moreover, every \(x\in E_{j}\) satisfies \(\mu_{x}^{2}\le 4^{-j}q^{2}\). It therefore remains to control the number \(m_{j}\) of elements in each group.

Fix \(j\ge 0\) with \(m_{j}\ge d\). Applying Lemma~\ref{lem:local-entropy} to \(E_{j}\), and using \(\mu_{x}>2^{-j-1}q\) for every \(x\in E_{j}\), gives
\[
m_{j}2^{-j-1}q\log\left(\frac{1}{q}\right)
<\sum_{x\in E_{j}}\mu_{x}\log\left(\frac{1}{q}\right)
\le d\log\left(\frac{em_{j}}{d}\right).
\]
Put \(L=\log\left(\frac{1}{q}\right)\) and \(u_{j}=\frac{m_{j}2^{-j-1}q}{d}\). The definition of \(u_{j}\) gives \(\frac{m_{j}}{d}=\frac{u_{j}2^{j+1}}{q}\), and hence
\[
\log\left(\frac{em_{j}}{d}\right)=1+\log u_{j}+(j+1)\log 2+L.
\]
Thus the preceding inequality becomes
\begin{equation}\label{eq:uj}
u_{j}L<L+(j+1)\log 2+1+\log u_{j}.
\end{equation}

\begin{claim}\label{claim:uj}
If \(m_{j}\ge d\), then \(u_{j}\le 4(j+2)\).
\end{claim}

\begin{poc}
Suppose that \(u_{j}>4(j+2)\). Since \(j\ge 0\), this implies \(u_{j}>8\), and in particular \(\log u_{j}>0\). Also, \(q\le\frac{1}{2}\) gives \(L\ge\log 2\). After dividing \eqref{eq:uj} by \(L\), we use \(\frac{(j+1)\log 2}{L}\le j+1\), \(\frac{1}{L}\le\frac{1}{\log 2}<2\), and \(\frac{\log u_{j}}{L}\le\log_{2}u_{j}\). It follows that
\[
u_{j}<j+4+\log_{2}u_{j}.
\]
The elementary inequality \(\log_{2}u\le\frac{u}{2}\) holds for \(u\ge 8\), so \(u_{j}<2j+8\). On the other hand, our assumption gives \(u_{j}>4j+8\ge 2j+8\), a contradiction.
\end{poc}

Returning to the definition of \(u_{j}\), Claim~\ref{claim:uj} gives
\[
m_{j}=\frac{du_{j}2^{j+1}}{q}\le \frac{8d(j+2)2^{j}}{q}
\]
whenever \(m_{j}\ge d\). Since every element of \(E_{j}\) contributes at most \(4^{-j}q^{2}\), such a group contributes at most
\[
\sum_{x\in E_{j}}\mu_{x}^{2}\le m_{j}4^{-j}q^{2}\le 8dq(j+2)2^{-j}.
\]
Summing over all groups with \(m_{j}\ge d\), and using \(\sum_{j\ge 0}(j+2)2^{-j}=6\), we obtain a total contribution of at most \(48dq\).

If instead \(m_{j}<d\), then the same pointwise bound gives
\[
\sum_{x\in E_{j}}\mu_{x}^{2}\le m_{j}4^{-j}q^{2}<d4^{-j}q^{2}.
\]
The total contribution of these groups is therefore less than \(dq^{2}\sum_{j\ge 0}4^{-j}=\frac{4}{3}dq^{2}\le\frac{2}{3}dq\), where the last inequality uses \(q\le\frac{1}{2}\). Adding the two contributions yields
\[
\sum_{x\in X}\mu_{x}^{2}<48dq+\frac{2}{3}dq=\frac{146}{3}dq<49dq,
\]
as required.
\end{proof}

Lemma~\ref{lem:square-sum} controls intersections in an averaged sense. Indeed, for a fixed \(x\in X\), there are exactly \(d_{\cG}(\{x\})^{2}\) ordered pairs \((A,B)\in\cG^{2}\) for which \(x\in A\cap B\). Summing this count over \(x\) will allow us to find a large pairwise disjoint subfamily.

\begin{lemma}\label{lem:matching}
Let \(\cG\) be a finite family of nonempty sets. If \(\cG\) is \(q\)-spread, \(\VC(\cG)\le d\), \(d\ge 1\), and \(0<q\le\frac{1}{2}\), then
\(
\nu(\cG)>\frac{1}{49dq}.
\)
\end{lemma}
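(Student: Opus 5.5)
We will find a large pairwise disjoint subfamily by applying the Caro--Wei bound (Lemma~\ref{lem:independence}) to the intersection graph of \(\cG\). Let \(J\) be the graph with vertex set \(\cG\), where two distinct members \(A,B\in\cG\) are adjacent whenever \(A\cap B\neq\varnothing\). An independent set in \(J\) is then a pairwise disjoint subfamily, so \(\nu(\cG)=\alpha(J)\). Write \(N=\abs{\cG}\). It therefore suffices to show
\[
\frac{N^{2}}{N+2e(J)}>\frac{1}{49dq}.
\]

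The key step is to bound \(2e(J)\) using Lemma~\ref{lem:square-sum}. Consider the ordered pairs \((A,B)\in\cG^{2}\) with \(A\neq B\) and \(A\cap B\neq\varnothing\); there are exactly \(2e(J)\) of them. Each such pair has at least one common element \(x\). Hence
\[
2e(J)\le \sum_{x\in X}\#\{(A,B)\in\cG^{2}:A\ne B,\ x\in A\cap B\}\le\sum_{x\in X}\bigl(d_{\cG}(\{x\})^{2}-d_{\cG}(\{x\})\bigr).
\]
We subtract the diagonal pairs \(A=B\) explicitly, since the \(+N\) term in the denominator must be absorbed.

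Substituting \(d_{\cG}(\{x\})=\mu_{x}N\) gives \(\sum_{x}d_{\cG}(\{x\})^{2}=N^{2}\sum_{x}\mu_{x}^{2}<49dqN^{2}\) by Lemma~\ref{lem:square-sum}. For the diagonal term we need \(\sum_{x}d_{\cG}(\{x\})=\sum_{G\in\cG}\abs{G}\ge N\), which holds because every member of \(\cG\) is nonempty. This is precisely where that hypothesis enters. It follows that
\[
N+2e(J)\le N+49dqN^{2}-N=49dqN^{2},
\]
with strict inequality coming from Lemma~\ref{lem:square-sum}.

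Lemma~\ref{lem:independence} then yields \(\alpha(J)\ge N^{2}/(N+2e(J))>1/(49dq)\). The only subtle point is keeping track of the diagonal. Without subtracting it, we would only get \(N^{2}/(N+49dqN^{2})\), which loses the clean bound. Using nonemptiness to cancel the \(N\) resolves this. Finiteness of \(\cG\) is needed so that \(N\) and the sums are finite.
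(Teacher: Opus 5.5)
Your proof is correct and is essentially the paper's argument. The paper includes the diagonal in the double count, bounding $N+2e(J)\le\sum_{A,B\in\cG}\abs{A\cap B}=\sum_{x}d_{\cG}(\{x\})^{2}$ with each pair $(A,A)$ contributing at least one by nonemptiness; this is the same computation as your separation of the diagonal term $\sum_x d_{\cG}(\{x\})\ge N$, and it is likewise followed by Lemma~\ref{lem:square-sum} and the Caro--Wei bound.
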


\begin{proof}[Proof of Lemma~\ref{lem:matching}]
Define a graph \(J\) whose vertices are the members of \(\cG\), with two distinct members joined by an edge exactly when they intersect. Thus an independent set in \(J\) is the same as a pairwise disjoint subfamily of \(\cG\).

Write \(N=\abs{\cG}\), choose a member \(G\) uniformly from \(\cG\), and put \(\mu_{x}=\mathbb{P}(x\in G)=\frac{d_{\cG}(\{x\})}{N}\). Consider the sum of \(\abs{A\cap B}\) over all ordered pairs \((A,B)\in\cG^{2}\). Since every member of \(\cG\) is nonempty, each of the \(N\) pairs \((A,A)\) contributes at least one. Moreover, every edge \(\{A,B\}\) of \(J\) gives the two ordered pairs \((A,B)\) and \((B,A)\), each of which also contributes at least one. It follows that
\[
N+2e(J)\le\sum_{A,B\in\cG}\abs{A\cap B}.
\]
We now count the sum on the right by first fixing \(x\in X\). There are \(d_{\cG}(\{x\})\) choices of \(A\) containing \(x\), and the same number of choices for \(B\). Thus \(x\) is counted in exactly \(d_{\cG}(\{x\})^{2}\) ordered pairs, and consequently
\begin{equation*}
\sum_{A,B\in\cG}\abs{A\cap B}
=\sum_{x\in X}d_{\cG}(\{x\})^{2}
=N^{2}\sum_{x\in X}\mu_{x}^{2}
<49dqN^{2},
\end{equation*}
where the last inequality follows from Lemma~\ref{lem:square-sum}. Combining the two estimates and applying Lemma~\ref{lem:independence}, we obtain
\[
\alpha(J)\ge\frac{N^{2}}{N+2e(J)}>\frac{1}{49dq}.
\]
Since independent sets in \(J\) are precisely the pairwise disjoint subfamilies of \(\cG\), we have \(\nu(\cG)=\alpha(J)\), which proves the lemma.
\end{proof}

We next show how to obtain a spread family from a sufficiently large uniform family. We select a set \(C\) that is contained in many members of the original family, but for which no proper extension is contained in a comparably large proportion of the members. After \(C\) is removed, this maximality becomes exactly the spread condition.

\begin{lemma}\label{lem:maximal-kernel}
Let \(0<q<1\), and let \(\cH\subseteq\binom{X}{\ell}\) satisfy \(\abs{\cH}>q^{-\ell}\). Then there is a set \(C\subseteq X\) with \(\abs{C}<\ell\) such that
\[
\cG=\{H\setminus C:H\in\cH,\ C\subseteq H\}
\]
is a nonempty \(q\)-spread, \((\ell-\abs{C})\)-uniform family with \(\VC(\cG)\le\VC(\cH)\).
\end{lemma}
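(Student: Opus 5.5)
We choose \(C\) by maximizing the weighted degree \(q^{-\abs{C}}d_{\cH}(C)\) over all subsets \(C\subseteq X\) with \(d_{\cH}(C)>0\). This is a finite maximization: only subsets of members of \(\cH\) qualify, and these have size at most \(\ell\). Since \(\varnothing\) is a candidate with value \(\abs{\cH}>q^{-\ell}\), the maximizer \(C\) satisfies \(q^{-\abs{C}}d_{\cH}(C)\ge\abs{\cH}>q^{-\ell}\).

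First we show \(\abs{C}<\ell\). If \(\abs{C}=\ell\), then because members of \(\cH\) have size exactly \(\ell\), at most one member contains \(C\). Hence \(d_{\cH}(C)\le 1\), and the weighted degree is at most \(q^{-\ell}\), a contradiction. So \(\abs{C}<\ell\). Also \(d_{\cH}(C)>0\), so \(\cG\) is nonempty. Each member has the form \(H\setminus C\) with \(C\subseteq H\), so it has size \(\ell-\abs{C}\). Distinct \(H\) containing \(C\) give distinct \(H\setminus C\), so \(\abs{\cG}=d_{\cH}(C)\).

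Next we prove that \(\cG\) is \(q\)-spread. Take a nonempty \(T\). If \(T\cap C\neq\varnothing\), then no member of \(\cG\) contains \(T\), so \(d_{\cG}(T)=0\). Otherwise \(T\) is disjoint from \(C\). Then \(T\subseteq H\setminus C\) exactly when \(C\cup T\subseteq H\), so \(d_{\cG}(T)=d_{\cH}(C\cup T)\). If this count is zero we are done. If it is positive, \(C\cup T\) is a candidate, and maximality of \(C\) gives
\[
q^{-\abs{C}-\abs{T}}d_{\cH}(C\cup T)\le q^{-\abs{C}}d_{\cH}(C).
\]
Rearranging, \(d_{\cG}(T)\le q^{\abs{T}}\abs{\cG}\), which is the spread condition.

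Finally we compare VC-dimensions. Suppose \(D\) is shattered by \(\cG\). Every member of \(\cG\) avoids \(C\), so \(D\cap C=\varnothing\) (when \(D\) is nonempty; the empty set is trivially fine). For \(G=H\setminus C\) we have \(G\cap D=H\cap D\). The traces of \(\cG\) on \(D\) are therefore among the traces of \(\cH\) on \(D\), so \(D\) is also shattered by \(\cH\). Hence \(\VC(\cG)\le\VC(\cH)\).

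No step here is a real obstacle. The only points needing care are the choice of the potential function \(q^{-\abs{C}}d_{\cH}(C)\) and the uniformity argument that rules out \(\abs{C}=\ell\).
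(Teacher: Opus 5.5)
Your proof is correct and follows essentially the same route as the paper. The paper picks $C$ inclusion-maximal among sets with $d_{\cH}(C)\ge\abs{\cH}q^{\abs{C}}$, while you take a global maximizer of $q^{-\abs{C}}d_{\cH}(C)$; in both versions spreadness comes from maximality, $\abs{C}<\ell$ comes from uniformity, and the VC bound comes from $G\cap D=H\cap D$. Your version also treats sets $T$ meeting $C$ and justifies $D\cap C=\varnothing$ explicitly, both of which the paper leaves implicit.
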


\begin{proof}[Proof of Lemma~\ref{lem:maximal-kernel}]
Choose an inclusion-maximal set \(C\subseteq X\) satisfying
\begin{equation}\label{eq:dense-kernel}
d_{\cH}(C)\ge \abs{\cH}q^{\abs{C}}.
\end{equation}
Such a choice is possible because \(X\) is finite and \(C=\varnothing\) satisfies \eqref{eq:dense-kernel}.

The right-hand side of \eqref{eq:dense-kernel} is positive, so \(C\) is contained in at least one member of \(\cH\). In particular, \(\abs{C}\le\ell\). Suppose that \(\abs{C}=\ell\). Since \(\cH\) is \(\ell\)-uniform, the only member of \(\cH\) that can contain \(C\) is \(C\) itself, and hence \(d_{\cH}(C)=1\). On the other hand,
\[
d_{\cH}(C)\ge\abs{\cH}q^{\ell}>q^{-\ell}q^{\ell}=1,
\]
a contradiction. Consequently, \(\abs{C}<\ell\).

Every member of \(\cH\) containing \(C\) produces, after \(C\) is removed, a set of size \(\ell-\abs{C}\). Moreover, the original member can be recovered by adding \(C\) back, so distinct members of \(\cH\) produce distinct members of \(\cG\). Thus \(\cG\) is nonempty and \((\ell-\abs{C})\)-uniform, and
\(
\abs{\cG}=d_{\cH}(C).
\)

We next verify the spread condition. Let \(\varnothing\ne T\subseteq X\setminus C\). If \(C\cup T\) also satisfied \eqref{eq:dense-kernel}, it would be a proper extension of \(C\) satisfying the same condition, contrary to the maximal choice of \(C\). Hence
\[
d_{\cH}(C\cup T)<\abs{\cH}q^{\abs{C}+\abs{T}}.
\]
A member of \(\cG\) contains \(T\) exactly when the corresponding member of \(\cH\) contains \(C\cup T\). Using \eqref{eq:dense-kernel}, we therefore obtain
\[
d_{\cG}(T)=d_{\cH}(C\cup T)
<\abs{\cH}q^{\abs{C}}q^{\abs{T}}
\le\abs{\cG}q^{\abs{T}}.
\]
This is precisely the \(q\)-spread condition.

It remains to compare the VC-dimensions. Suppose that \(D\subseteq X\setminus C\) is shattered by \(\cG\). For every \(A\subseteq D\), there is a member \(G\in\cG\) such that \(G\cap D=A\). By the definition of \(\cG\), there is an \(H\in\cH\) containing \(C\) for which \(G=H\setminus C\). Since \(D\cap C=\varnothing\),
\[
H\cap D=(H\setminus C)\cap D=G\cap D=A.
\]
Thus every subset of \(D\) also occurs as the intersection of \(D\) with a member of \(\cH\), so \(D\) is shattered by \(\cH\). We conclude that \(\VC(\cG)\le\VC(\cH)\).
\end{proof}

\begin{proof}[Proof of Theorem~\ref{thm:main}]
We first prove the assertion for an \(\ell\)-uniform family. Set \(Q=50dr\) and \(q=Q^{-1}\). Since \(d\ge 1\) and \(r\ge 2\), we have \(q\le\frac{1}{2}\).

Suppose that \(\cH\subseteq\binom{X}{\ell}\), that \(\VC(\cH)\le d\), and that \(\abs{\cH}>Q^{\ell}=q^{-\ell}\). By Lemma~\ref{lem:maximal-kernel}, there is a set \(C\) such that
\[
\cG=\{H\setminus C:H\in\cH,\ C\subseteq H\}
\]
is a nonempty \(q\)-spread family with VC-dimension at most \(d\). Lemma~\ref{lem:matching} gives
\[
\nu(\cG)>\frac{1}{49dq}=\frac{50r}{49}>r.
\]
In particular, \(\cG\) contains \(r\) pairwise disjoint members \(G_{1},\ldots,G_{r}\). For each \(i\), the definition of \(\cG\) gives a member \(H_{i}=C\cup G_{i}\in\cH\). Every \(G_{i}\) is disjoint from \(C\), and \(G_{i}\cap G_{j}=\varnothing\) whenever \(i\ne j\). Consequently,
\[
H_{i}\cap H_{j}=(C\cup G_{i})\cap(C\cup G_{j})=C
\]
for every \(i\ne j\). Thus \(H_{1},\ldots,H_{r}\) form an \(r\)-sunflower with kernel \(C\). We have proved that every \(\ell\)-uniform family with no \(r\)-sunflower has at most \(Q^{\ell}\) members.

Now let \(\cH\) be \(\ell\)-bounded, and partition it according to the sizes of its members. For \(0\le k\le\ell\), let \(\cH_{k}=\cH\cap\binom{X}{k}\). Each \(\cH_{k}\) is a subfamily of \(\cH\). Removing members cannot create a shattered set, so \(\VC(\cH_{k})\le d\). Likewise, an \(r\)-sunflower in \(\cH_{k}\) would also be an \(r\)-sunflower in \(\cH\), and hence no \(\cH_{k}\) contains one. The uniform case therefore gives \(\abs{\cH_{k}}\le Q^{k}\) for \(1\le k\le\ell\). The family \(\cH_{0}\) contains at most the empty set, so \(\abs{\cH_{0}}\le 1=Q^{0}\). It follows that
\[
\abs{\cH}
=\sum_{k=0}^{\ell}\abs{\cH_{k}}
\le\sum_{k=0}^{\ell}Q^{k}
=\frac{Q^{\ell+1}-1}{Q-1}
<2Q^{\ell},
\]
where the final inequality uses \(Q\ge 2\). Substituting \(Q=50dr\) completes the proof.
\end{proof}

\bibliographystyle{abbrv}
\bibliography{bib}

\end{document}